\newtheorem{thm}{Theorem}[section]
\newtheorem{cor}[thm]{Corollary}
\newtheorem{lem}[thm]{Lemma}
\newtheorem{prop}[thm]{Proposition}
\theoremstyle{definition}
\theoremstyle{remark}
\numberwithin{equation}{section}
\newcommand{\C}{\mathbb{C}}
\begin{document}
\title[J-class of  abelian group of afiine maps  on $\mathbb{C}^{n}$ and Hypercyclicity]
{J-class of finitely generated abelian semigroups of affine mpas on $\mathbb{C}^{n}$ and Hypercyclicity}

\author{Yahya N'dao}

 \address{Yahya N'dao, University of Moncton, Department of mathematics and statistics, Canada}
 \email{yahiandao@yahoo.fr ; yahiandao@voila.fr}

\subjclass[2000]{47A16}

\keywords{Hypercyclic, affine maps, orbit, locally hypercyclic,
J-class, dense orbit, semigroup, abelian, subgroup} \maketitle

\begin{abstract} We give a characterization of hypercyclic using (locally hypercyclic)
 of semigroup $\mathcal{G}$ of affine maps of $\mathbb{C}^{n}$. We
 prove the existence of a $\mathcal{G}$-invariant open subset of
 $\mathbb{C}^{n}$ in which any locally hypercyclic orbit is
 dense in $\mathbb{C}^{n}$.
\end{abstract}
\maketitle

\section{\bf Introduction }
Let $M_{n}(\mathbb{C})$ be the set of all square matrices of order
$n\geq 1$  with entries in  $\mathbb{C}$ and $GL(n, \ \mathbb{C})$
be the group of all invertible matrices of $M_{n}(\mathbb{C})$. A
map $f: \ \mathbb{C}^{n}\longrightarrow \mathbb{C}^{n}$  is called
an affine map if there exist $A\in M_{n}(\mathbb{C})$  and  $a\in
\mathbb{C}^{n}$  such that  $f(x)= Ax+a$,  $x\in \mathbb{C}^{n}$.
We denote  $f= (A,a)$, we call  $A$ the \textit{linear part} of
$f$. The map $f$ is invertible if $A\in GL(n, \mathbb{C})$. Denote
by $MA(n, \ \mathbb{C})$  the vector space of all affine maps on
$\mathbb{C}^{n}$ and $GA(n, \ \mathbb{C})$ the group of all
invertible affine maps of $MA(n, \mathbb{C})$.

Let  $\mathcal{G}$  be an abelian affine sub-semigroup of $MA(n, \
\mathbb{C})$. For a vector  $v\in \mathbb{C}^{n}$,  we consider
the orbit of  $\mathcal{G}$  through  $v$: $ \mathcal{G}(v) =
\{f(v): \ f\in \mathcal{G}\} \subset \mathbb{\mathbb{C}}^{n}$.
Denote by $\overline{E}$
 the closure of a subset $E\subset \mathbb{C}^{n}$.
The semigroup $\mathcal{G}$ is called \textit{hypercyclic} if
there exists a vector $v\in {\mathbb{C}}^{n}$ such that
$\overline{\mathcal{G}(v)}={\mathbb{C}}^{n}$. For an account of
results and bibliography on hypercyclicity, we refer to the book
\cite{bm} by Bayart and Matheron.
\
We refer the reader to the recent book \cite{bm} and \cite{p} for
a thorough account on hypercyclicity. Costakis and Manoussos in
\cite{chm2} ``localize`` the concept of hypercyclicity using
J-sets. By analogy, we generalize this notion to affine case as
follow:  Suppose that $\mathcal{G}$ is generated by $p$ affines
maps $f_{1},\dots,f_{p}$ $(p\geq 1)$ then for $x\in
\mathbb{C}^{n}$, we define the extended limit set
J$_{\mathcal{G}}(x)$ to be the set of $y\in \mathbb{C}^{n}$ for
which there exists a sequence of vectors $(x_{m})_{m}$ with
$x_{m}\longrightarrow x$ and sequences of non-negative integers
$\{k^{(j)}_{m}:\ m\in \mathbb{N}\}$ for $j = 1,2,\dots, p$ with $$
(1.1)\ \ \ \ \ \ k^{(1)}_{m} + k^{(2)}_{m}+\dots+
k^{(p)}_{m}\longrightarrow+\infty$$ such that
$$f_{1}^{k^{(1)}_{m}}f_{2}^{k^{(2)}_{m}}\dots
f_{p}^{k^{(p)}_{m}}x_{m}\longrightarrow y.$$

Note that condition (1.1) is equivalent to having at least one of
the sequences $\{k^{(j)}_{m}:\ m\in\mathbb{N}\}$  for $j =
1,2,\dots, p$ containing a strictly increasing subsequence tending
to $+\infty$. We say that $\mathcal{G}$ is \textit{locally
hypercyclic} if there exists a vector
 $v\in {\mathbb{C}}^{n}\backslash\{0\}$ such that
  J$_{\mathcal{G}}(v) = {\mathbb{C}}^{n}$.
So, the question to investigate is the following: When an abelian
sub-semigroup of $MA(n, \mathbb{C})$ can be hypercyclic?

The main purpose of this paper is twofold: firstly, we give a
general characterization of the above question for any abelian
\textit{sub-semigroup} of $MA(n, \mathbb{C})$ using J-sets.
Secondly, we generalize the results proved  in \cite{JCL}, by
A.Ayadi and H.Marzougi, for linear semigroups, which  answers
negatively, the question raised in the paper of Costakis and
Manoussos \cite{chm2}: is it true that a locally hypercyclic
abelian semigroup $H$ generated by matrices $A_{1},\dots,A_{n}$ is
hypercyclic whenever J$_{H}(x)=\mathbb{C}^{n}$ for a finite set of
$x\in \mathbb{C}^{n}$ whose vector space is equal
$\mathbb{C}^{n}$? Similarly for  $\mathbb{R}^{n}$.
\bigskip

Denote by $\mathcal{B}_{0}=(e_{1},\dots,e_{n})$ the canonical
basis of $\mathbb{C}^{n}$. Let $P\in \textrm{GL}(n, \mathbb{C})$.

Let introduce the following notations and definitions. Denote by:
\
\\
 Let $n\in\mathbb{N}_{0}$  be fixed, denote by:
\\
\textbullet \ $\mathbb{C}^{*}= \mathbb{C}\backslash\{0\}$,
$\mathbb{R}^{*}= \mathbb{R}\backslash\{0\}$  and $\mathbb{N}_{0}=
\mathbb{N}\backslash\{0\}$.
\
\\
\textbullet \ $\mathcal{B}_{0} = (e_{1},\dots,e_{n+1})$ the
canonical basis of $\mathbb{C}^{n+1}$  and   $I_{n+1}$  the
identity matrix of $GL(n+1,\mathbb{C})$.
\
\\
For each $m=1,2,\dots, n+1$, denote by:
\\
\textbullet \ $\mathbb{T}_{m}(\mathbb{C})$ the set of matrices
over $\mathbb{C}$ of the form \begin{align}\begin{bmatrix}
  \mu & \ &  \ & 0 \\
  a_{2,1} & \mu &  \ &  \\\
  \vdots &  \ddots & \ddots & \ \\
  a_{m,1} & \dots & a_{m,m-1} & \mu
\end{bmatrix}& \  \label{eq1}
\end{align}
\\
\textbullet \ $\mathbb{T}_{m}^{\ast}(\mathbb{C})$  the group of
matrices of the form (~\ref{eq1}) with $\mu\neq 0$.
\
\\
Let  $r\in \mathbb{N}$ and $\eta
=(n_{1},\dots,n_{r})\in\mathbb{N}^{r}_{0}$ such that $n_{1}+\dots
+ n_{r}=n+1. $ In particular, $r\leq n+1$. Write
\
\\
\textbullet \ $\mathcal{K}_{\eta,r}(\mathbb{C}): =
\mathbb{T}_{n_{1}}(\mathbb{C})\oplus\dots \oplus
\mathbb{T}_{n_{r}}(\mathbb{C}).$ In particular if  $r=1$, then
$\mathcal{K}_{\eta,1}(\mathbb{C}) = \mathbb{T}_{n+1}(\mathbb{C})$
and  $\eta=(n+1)$.
\
\\
\textbullet \ $\mathcal{K}^{*}_{\eta,r}(\mathbb{C}): =
\mathcal{K}_{\eta,r}(\mathbb{C})\cap \textrm{GL}(n+1, \
\mathbb{C})$.\
\
\\
\textbullet \ $u_{0} = (e_{1,1},\dots,e_{r,1})\in
\mathbb{C}^{n+1}$ where
 $e_{k,1} = (1,0,\dots,
0)\in \mathbb{C}^{n_{k}}$, for $k=1,\dots, r$. So
$u_{0}\in\{1\}\times\mathbb{C}^{n}$.
\\
\textbullet \ $p_{2}:\mathbb{C}\times
\mathbb{C}^{n}\longrightarrow\mathbb{C}^{n}$ the second projection
defined by $p_{2}(x_{1},\dots,x_{n+1})=(x_{2},\dots,x_{n+1})$.\
\\
 \textbullet \ Define the map \  \  \ $\Phi\ :\ GA(n,\ \mathbb{C})\
\longrightarrow\
 GL(n+1,\ \mathbb{C})$ \\
  $$f =(A,a) \ \longmapsto\ \begin{bmatrix}
                     1  & 0 \\
                     a & A
                 \end{bmatrix}$$
We have the following composition formula
 $$\begin{bmatrix}
                     1  & 0 \\
                     a & A
                 \end{bmatrix}\begin{bmatrix}
                     1  & 0 \\
                     b & B
                 \end{bmatrix} = \begin{bmatrix}
                     1  & 0 \\
                     Ab+a & AB
                 \end{bmatrix}.$$
 Then  $\Phi$  is an injective homomorphism of groups.\ Write
  \
\\
\textbullet \; $G=\Phi(\mathcal{G})$, it is an abelian subgroup of
$GL(n+1, \mathbb{C})$.
\\

Let consider the normal form of $G$: By Proposition ~\ref{p:2},
there exists a $P\in \Phi(\textrm{GA}(n, \mathbb{C}))$ and a
partition $\eta$ of $(n+1)$ such that $G'=P^{-1}GP\subset
\mathcal{K}^{*}_{\eta,r}(\mathbb{C})\cap\Phi(GA(n,\mathbb{C}))$.
For such a choice of matrix $P$,  we let
\
\\
\textbullet \; $v_{0} = Pu_{0}$. So $v_{0}\in
\{1\}\times\mathbb{C}^{n}$, since
  $P\in\Phi(GA(n, \mathbb{C}))$.
 \\
 \textbullet \; $w_{0} = p_{2}(v_{0})\in\mathbb{C}^{n}$. We have $v_{0}=(1, w_{0})$.\
 \\
\textbullet \; $\varphi=\Phi^{-1}(P)\in GA(n,\mathbb{C})$.
 \\

 Denote by
$V':=\underset{k=1}{\overset{r}{\prod}}\mathbb{C}^{*}\times\mathbb{C}^{n_{k}-1}$,
$V=P(V')$ and $U'=p_{2}(V')$, then

$$(1)\ \ \ \ \ \ \ \ \ \ \   \ \ \ U'=\left\{\begin{array}{cc}
  \mathbb{C}^{n_{1}-1}\times\underset{k=2}{\overset{r}{\prod}}\mathbb{C}^{*}\times\mathbb{C}^{n_{k}-1}& \ \ if\ r\geq2 \\
  \\
  \mathbb{C}^{n_{1}-1} \ \ \ \ \ \  \ \ \ \ \ \ \ \  \ \ \ \ \ \ \ \  &\ \ if\ r=1.
\end{array}\right.$$
\medskip

 For such choise of the matrix $P\in \Phi(GA(n+1,
\C))$, we can write $$P=\left[\begin{array}{cc}
  1 & 0 \\
  d & Q
\end{array}\right],\ \ \mathrm{with}\ \ \ Q\in GL(n, \C).$$  We have $\varphi=(Q,d)\in GA(n, \C)$,
 $U=\varphi(U')$ and $G^{*}=G\cap GL(n+1,\C)$. We have $G^{*}$ is an abelian semigroup of $GL(n+1,\C)$.
 \medskip

Our principal results are the following:
\bigskip

\begin{thm}\label{t:1} Let $\mathcal{G}$ be a finitely generated abelian semigroup of affine maps on $\mathbb{C}^{n}$.
 If $J_{\mathcal{G}}(v)=\mathbb{C}^{n}$ for some $v\in U$ then
$\overline{\mathcal{G}(v)} = \mathbb{C}^{n}$.
\end{thm}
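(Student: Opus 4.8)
The plan is to transport the whole problem to the linear group $G=\Phi(\mathcal{G})$ acting on $\mathbb{C}^{n+1}$ and to exploit the normal form $G'=P^{-1}GP\subset \mathcal{K}^{*}_{\eta,r}(\mathbb{C})$ supplied by Proposition~\ref{p:2}. First I would record the basic compatibility of $\Phi$ with the dynamics: for $f=(A,a)$ one has $\Phi(f)(1,x)=(1,f(x))$, so the hyperplane $H=\{1\}\times\mathbb{C}^{n}$ is $G$-invariant and $\Phi$ conjugates the affine action of $\mathcal{G}$ on $\mathbb{C}^{n}$ to the linear action of $G$ on $H$. Consequently $\overline{\mathcal{G}(v)}=\mathbb{C}^{n}$ if and only if $\overline{G\,\widetilde{v}}=H$, where $\widetilde{v}=(1,v)$, and likewise $y\in J_{\mathcal{G}}(v)$ if and only if $(1,y)\in J_{G}(\widetilde{v})$. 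Thus it suffices to prove the statement for the linear orbit of $\widetilde{v}$ on $H$.

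Next I would remove the conjugation. Since $P\in\Phi(GA(n,\mathbb{C}))$ comes from an invertible affine map $\varphi$, it is a homeomorphism preserving $H$, and conjugation by $P$ carries orbit closures to orbit closures and J-sets to J-sets. Replacing $G$ by $G'$, $\widetilde{v}$ by $P^{-1}\widetilde{v}$ and $v$ by $\varphi^{-1}(v)$, I may therefore assume $G\subset\mathcal{K}^{*}_{\eta,r}(\mathbb{C})$, that $\widetilde{v}\in V'$ and $v\in U'$; by the definitions $U=\varphi(U')$ and $V=P(V')$ this is exactly the hypothesis $v\in U$ rewritten in normal-form coordinates. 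The point of this reduction is that, in these coordinates, each generator acts block-triangularly along the partition $\eta$ with constant diagonal on each block, and the hypothesis $\widetilde{v}\in V'$ says precisely that the leading coordinate of $\widetilde{v}$ in every block is nonzero.

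The heart of the argument is the inclusion $J_{\mathcal{G}}(v)\subseteq\overline{\mathcal{G}(v)}$ for $v\in U$; granting it we are done, since then $\mathbb{C}^{n}=J_{\mathcal{G}}(v)\subseteq\overline{\mathcal{G}(v)}\subseteq\mathbb{C}^{n}$. To prove the inclusion, take $y\in J_{\mathcal{G}}(v)$ with a witness $x_{m}\to v$ and maps $g_{m}=f_{1}^{k^{(1)}_{m}}\cdots f_{p}^{k^{(p)}_{m}}=(A_{m},a_{m})$ whose exponent sum tends to $+\infty$ and with $g_{m}x_{m}\to y$. Writing $g_{m}(x_{m})-g_{m}(v)=A_{m}(x_{m}-v)$, one gets $g_{m}x_{m}=g_{m}v+A_{m}(x_{m}-v)$; hence if the error term $A_{m}(x_{m}-v)$ tends to $0$ along a subsequence, then $g_{m}v\to y$ and $y\in\overline{\mathcal{G}(v)}$. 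The main obstacle is exactly the control of $A_{m}(x_{m}-v)$, because $A_{m}$ (the lower-right block of the normal form) may be unbounded while $x_{m}-v\to0$. I would handle this with the block-triangular structure: after passing to a subsequence, normalise by the dominant growth factor of $A_{m}$ and compare, block by block, the image $A_{m}(x_{m}-v)$ of the vanishing perturbation against the simultaneously growing orbit vector $g_{m}v$. Because $\widetilde{v}\in V'$ has nonzero leading coordinate in every block, the dominant direction of $A_{m}$ is already realised by $g_{m}v$, so any nonzero limit of $A_{m}(x_{m}-v)$ is itself a limit of orbit vectors and therefore still lies in $\overline{\mathcal{G}(v)}$; this is where genericity of $v\in U$ is indispensable, and where a point outside $U$ would let the perturbation escape the orbit closure.

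Finally, combining the inclusion just established with the hypothesis $J_{\mathcal{G}}(v)=\mathbb{C}^{n}$ yields $\overline{\mathcal{G}(v)}=\mathbb{C}^{n}$, which is the assertion. I expect the third step (the error-term estimate via the block filtration together with the diagonalisation and subsequence argument) to be by far the most delicate part, whereas the linearisation through $\Phi$ and the normal-form reduction through $P$ are essentially bookkeeping.
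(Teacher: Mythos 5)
Your first two steps (linearising through $\Phi$ so that $\mathcal{G}$ acting on $\mathbb{C}^{n}$ becomes $G$ acting on the invariant hyperplane $\{1\}\times\mathbb{C}^{n}$, and passing to the normal form $P^{-1}GP\subset\mathcal{K}_{\eta,r}(\mathbb{C})$ via Proposition~\ref{p:2}) coincide with the paper's bookkeeping. But at the crucial step you diverge from the paper and, in my view, leave a genuine gap. The paper does \emph{not} prove the inclusion $J_{\mathcal{G}}(v)\subseteq\overline{\mathcal{G}(v)}$ from scratch: it enlarges $G$ to the semigroup $\widetilde{G}$ generated by $G$ and the scalars $\mathbb{C}I_{n+1}$, checks via Lemma~\ref{L:1} that $J_{\mathcal{G}}(v)=\mathbb{C}^{n}$ forces $J_{\widetilde{G}}(1,v)=\mathbb{C}^{n+1}$, invokes the cited Theorem~\ref{T:5} of Ayadi--Marzougui for linear semigroups to get $\overline{\widetilde{G}(1,v)}=\mathbb{C}^{n+1}$, and then descends back to $\overline{\mathcal{G}(v)}=\mathbb{C}^{n}$ by Lemma~\ref{LL1L:9}. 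The passage to $\widetilde{G}$ is not optional decoration: working only with $G$ on the hyperplane, as you do, the J-set hypothesis gives you a J-set equal to $\{1\}\times\mathbb{C}^{n}$, not to $\mathbb{C}^{n+1}$, so the linear theorem cannot be quoted directly; the scalar factor is exactly what fills out the missing dimension.

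The gap is in your third step, which is where all the real difficulty of the theorem lives. From $g_{m}x_{m}=g_{m}v+A_{m}(x_{m}-v)\to y$ you must contend with the case where $A_{m}(x_{m}-v)$ has a nonzero limit $c$ along a subsequence. Your proposed resolution --- that ``any nonzero limit of $A_{m}(x_{m}-v)$ is itself a limit of orbit vectors and therefore still lies in $\overline{\mathcal{G}(v)}$'' --- does not close the argument even if it were established: it would yield $c\in\overline{\mathcal{G}(v)}$ and $g_{m}v\to y-c$, hence $y-c\in\overline{\mathcal{G}(v)}$, but $\overline{\mathcal{G}(v)}$ is not closed under addition, so you cannot conclude $y\in\overline{\mathcal{G}(v)}$. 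Moreover the claim itself is only asserted, not argued; controlling the unbounded factors $A_{m}$ against the vanishing perturbation $x_{m}-v$ using the block filtration and the nonvanishing leading coordinates of $v_{0}$ is precisely the content of the linear-case theorem that the paper cites as a black box, and it occupies a separate paper. Either quote that result (after the $\widetilde{G}$ enlargement, as the paper does) or supply the full block-by-block estimate; as written, the heart of your proof is a restatement of the difficulty rather than a solution of it.
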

\medskip

\begin{cor}\label{c:1} Under the hypothesis of Theorem \ref{t:1}, the following are equivalent:
\begin{itemize}
\item [(i)] $\mathcal{G}$ is hypercyclic.
\item [(ii)] $J_{\mathcal{G}}(w_{0}) = \mathbb{C}^{n}$.
\item [(iii)] $\overline{\mathcal{G}(w_{0})} = \mathbb{C}^{n}$.
\end{itemize}
\end{cor}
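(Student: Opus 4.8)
The plan is to prove the cyclic chain $(i)\Rightarrow(ii)\Rightarrow(iii)\Rightarrow(i)$. The one preliminary fact that unlocks everything is that the distinguished point $w_{0}$ lies in $U$, so that Theorem~\ref{t:1} may be applied at $w_{0}$. To verify $w_{0}\in U$, write $u_{0}=(1,\widetilde u_{0})$ with $\widetilde u_{0}=p_{2}(u_{0})$; from the displayed form of $P$ one gets $v_{0}=Pu_{0}=(1,\,Q\widetilde u_{0}+d)$, whence $w_{0}=p_{2}(v_{0})=\varphi(\widetilde u_{0})$. Reading $\widetilde u_{0}$ blockwise, its first block is $0\in\mathbb{C}^{n_{1}-1}$ (the part of $e_{1,1}$ surviving the deletion of the global first coordinate) and its $k$-th block for $k\geq 2$ is $e_{k,1}\in\mathbb{C}^{*}\times\mathbb{C}^{n_{k}-1}$; comparison with the description $(1)$ of $U'$ shows $\widetilde u_{0}\in U'$, and therefore $w_{0}=\varphi(\widetilde u_{0})\in\varphi(U')=U$.

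Granting this, the implication $(ii)\Rightarrow(iii)$ is immediate: applying Theorem~\ref{t:1} with $v=w_{0}\in U$ turns $J_{\mathcal{G}}(w_{0})=\mathbb{C}^{n}$ into $\overline{\mathcal{G}(w_{0})}=\mathbb{C}^{n}$. The implication $(iii)\Rightarrow(i)$ is a tautology, since $\overline{\mathcal{G}(w_{0})}=\mathbb{C}^{n}$ exhibits $w_{0}$ as a hypercyclic vector, so $\mathcal{G}$ is hypercyclic by definition.

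The substance of the corollary is $(i)\Rightarrow(ii)$, which I would split into two steps. The second step is a soft general fact: if $\overline{\mathcal{G}(w_{0})}=\mathbb{C}^{n}$ then $J_{\mathcal{G}}(w_{0})=\mathbb{C}^{n}$. Indeed, a dense orbit is infinite, so every ball around a given $y\in\mathbb{C}^{n}$ meets $\mathcal{G}(w_{0})$ in infinitely many points; these arise from infinitely many distinct elements $f_{1}^{k_{1}}\cdots f_{p}^{k_{p}}$ of $\mathcal{G}$, and since only finitely many exponent tuples have bounded sum $k_{1}+\cdots+k_{p}$, one may select a sequence with $k_{1}+\cdots+k_{p}\to+\infty$ and $f_{1}^{k_{1}}\cdots f_{p}^{k_{p}}w_{0}\to y$; taking the constant sequence $x_{m}=w_{0}$ then places $y$ in $J_{\mathcal{G}}(w_{0})$. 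The first step is the crux: one must show that hypercyclicity of $\mathcal{G}$ forces $w_{0}$ itself to be hypercyclic, i.e. $\overline{\mathcal{G}(w_{0})}=\mathbb{C}^{n}$.

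For this first step I would pass through the linearization $\Phi$. The identity $\Phi(f)(1,x)=(1,f(x))$ identifies the action of $\mathcal{G}$ on $\mathbb{C}^{n}$ with that of $G=\Phi(\mathcal{G})$ on the affine slice $\{1\}\times\mathbb{C}^{n}$; conjugating by $P$ carries the orbit of $(1,x)$ homeomorphically onto the $G'$-orbit of $P^{-1}(1,x)$, where $G'=P^{-1}GP\subset\mathcal{K}^{*}_{\eta,r}(\mathbb{C})$ is in normal form and $u_{0}=P^{-1}v_{0}$. Thus $\mathcal{G}$ is hypercyclic if and only if some vector of $\{1\}\times\mathbb{C}^{n}$ has dense $G'$-orbit, while $\overline{\mathcal{G}(w_{0})}=\mathbb{C}^{n}$ is equivalent to $u_{0}$ having dense $G'$-orbit. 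The main obstacle is therefore the structural assertion that, for the block lower-triangular abelian group $\mathcal{K}^{*}_{\eta,r}(\mathbb{C})$, the canonical vector $u_{0}=(e_{1,1},\dots,e_{r,1})$ is hypercyclic as soon as any vector is; this is exactly where I expect to invoke Proposition~\ref{p:2} together with the normal-form machinery generalized from the linear case in \cite{JCL}. Once that is in hand, the chain closes and all three conditions are equivalent.
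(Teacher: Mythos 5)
Your handling of $(ii)\Rightarrow(iii)$ and $(iii)\Rightarrow(i)$ matches the paper, and your explicit verification that $w_{0}=\varphi(p_{2}(u_{0}))\in\varphi(U')=U$ (which the paper leaves implicit) is correct and welcome. The problem is $(i)\Rightarrow(ii)$. You reduce it to the assertion that hypercyclicity of $\mathcal{G}$ forces the particular vector $w_{0}$ to have a dense orbit, you correctly flag this as the crux, and then you do not prove it: you only say you ``expect to invoke Proposition~\ref{p:2} together with the normal-form machinery.'' Proposition~\ref{p:2} is merely the existence of the simultaneous block-triangularization; it says nothing about $u_{0}$ being a universal hypercyclic vector for a hypercyclic sub-semigroup of $\mathcal{K}^{*}_{\eta,r}(\mathbb{C})$. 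That assertion is a substantial theorem in its own right (essentially the main content of \cite{AA12} and \cite{JCL}), so as written your chain does not close. Your auxiliary ``soft'' step (a dense orbit of a finitely generated semigroup forces the full $J$-set, via the finiteness of exponent tuples with bounded sum) is correctly argued, but it rests on the unproved step.

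The paper closes $(i)\Rightarrow(ii)$ by a different and much shorter route that you do not use: Proposition~\ref{p:101} (Costakis--Manoussos) states that a hypercyclic abelian matrix semigroup has $J(x)=\mathbb{C}^{n+1}$ at \emph{every} point $x$. Combining this with the transfer statements --- Lemma~\ref{LL1L:9} to pass from hypercyclicity of $\mathcal{G}$ to hypercyclicity of $\widetilde{G}$, then Proposition~\ref{p:101} to obtain $J_{\widetilde{G}}(1,w_{0})=\mathbb{C}^{n+1}$, then Lemma~\ref{L:1} to return to $J_{\mathcal{G}}(w_{0})=\mathbb{C}^{n}$ --- yields $(i)\Rightarrow(ii)$ directly, with no need to know that $w_{0}$ itself is a hypercyclic vector. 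To repair your argument, replace the unproved structural claim by this appeal to Proposition~\ref{p:101}; your dense-orbit-implies-full-$J$-set step then becomes unnecessary.
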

\medskip

\begin{cor}\label{c:2} Under the hypothesis of Theorem \ref{t:1}, set $\mathcal{A}=\{x\in \mathbb{C}^{n}:  J_{\mathcal{G}}(x)= \mathbb{C}^{n}\}$.
If $\mathcal{G}$ is not hypercyclic then $\mathcal{A}\subset
\underset{k=1}{\overset{r}{\bigcup}}H_{k}$, ($r\leq n$) where
$H_{k}$ are $\mathcal{G}$-invariant affine subspaces of
$\mathbb{C}^{n}$ with dimension $n-1$.
\end{cor}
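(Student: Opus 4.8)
The plan is to combine Theorem \ref{t:1} with the explicit geometry of the set $U$. First I would argue by contraposition at the level of the whole semigroup: Theorem \ref{t:1} says that any $v\in U$ with $J_{\mathcal{G}}(v)=\mathbb{C}^{n}$ already satisfies $\overline{\mathcal{G}(v)}=\mathbb{C}^{n}$, i.e. witnesses hypercyclicity. Hence if $\mathcal{G}$ is \emph{not} hypercyclic, no point of $U$ can lie in $\mathcal{A}$, so $\mathcal{A}\cap U=\emptyset$ and therefore $\mathcal{A}\subset\mathbb{C}^{n}\setminus U$. It then remains to show that $\mathbb{C}^{n}\setminus U$ is contained in a union of $\mathcal{G}$-invariant affine hyperplanes; since the first block contributes no hyperplane, this will produce $r-1\leq n$ of them.

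Next I would make the complement of $U$ explicit. Since $\varphi=(Q,d)\in GA(n,\mathbb{C})$ is an affine bijection of $\mathbb{C}^{n}$ and $U=\varphi(U')$, we have $\mathbb{C}^{n}\setminus U=\varphi(\mathbb{C}^{n}\setminus U')$. From formula $(1)$ in the case $r\geq 2$, $U'$ is exactly the set of points whose block-leading coordinates for blocks $2,\dots,r$ are all nonzero; writing $s_{k}=n_{1}+\dots+n_{k-1}+1$ for the leading index of block $k$ and $L_{k}=s_{k}-1$ for its index after dropping the first coordinate, we get $\mathbb{C}^{n}\setminus U'=\bigcup_{k=2}^{r}H_{k}'$, where $H_{k}'=\{w\in\mathbb{C}^{n}:w_{L_{k}}=0\}$ is a coordinate hyperplane. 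Consequently $H_{k}:=\varphi(H_{k}')$ is an affine subspace of dimension $n-1$, and $\mathbb{C}^{n}\setminus U=\bigcup_{k=2}^{r}H_{k}$. (When $r=1$ one has $U=\mathbb{C}^{n}$, so $\mathcal{A}=\emptyset$ and there is nothing to prove.)

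The core step is the $\mathcal{G}$-invariance of the $H_{k}$, which I would read off the normal form. Work first with $G'=P^{-1}GP\subset\mathcal{K}^{*}_{\eta,r}(\mathbb{C})\cap\Phi(GA(n,\mathbb{C}))$. Every $g'\in G'$ is block diagonal with each block lower triangular carrying a constant nonzero scalar $\mu_{k}(g')$ on the $k$-th diagonal block; since a lower-triangular block sends its first coordinate to $(g'x)_{s_{k}}=\mu_{k}(g')\,x_{s_{k}}$, the linear hyperplane $\{x_{s_{k}}=0\}$ of $\mathbb{C}^{n+1}$ is $G'$-invariant for each $k$. Moreover $G'\subset\Phi(GA(n,\mathbb{C}))$ fixes the first coordinate, so it preserves the affine chart $\{1\}\times\mathbb{C}^{n}$, which $p_{2}$ identifies with $\mathbb{C}^{n}$ carrying the $G'$-action to the affine action of $\mathcal{G}'=\Phi^{-1}(G')$. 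Intersecting $\{x_{s_{k}}=0\}$ with this chart and pushing through $p_{2}$ shows that $H_{k}'=\{w_{L_{k}}=0\}$ is $\mathcal{G}'$-invariant for $k=2,\dots,r$. Since $\Phi$ is a homomorphism and $\varphi=\Phi^{-1}(P)$, we have $\mathcal{G}'=\Phi^{-1}(P^{-1}GP)=\varphi^{-1}\mathcal{G}\varphi$, i.e. $\mathcal{G}=\varphi\,\mathcal{G}'\,\varphi^{-1}$; applying $\varphi$ therefore transports invariance, so each $H_{k}=\varphi(H_{k}')$ is $\mathcal{G}$-invariant. Combined with the first paragraph this gives $\mathcal{A}\subset\bigcup_{k=2}^{r}H_{k}$ with the $H_{k}$ the desired invariant hyperplanes.

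The main obstacle I anticipate is the careful bookkeeping of the three changes of coordinates — the linearization $\Phi$, the conjugation by $P$, and the projection $p_{2}$ — and in particular checking that invariance of $\{x_{s_{k}}=0\}$ under $G'$ descends correctly to invariance of the affine hyperplane $H_{k}$ under $\mathcal{G}$. The algebraic fact that a lower-triangular block multiplies its leading coordinate by the diagonal scalar is immediate; the delicate part is confirming that the chart identification and the conjugation $\varphi$ disturb neither the dimension count nor the invariance, and reconciling the index range $k=2,\dots,r$ (which yields $r-1\leq n$ hyperplanes) with the labeling in the statement.
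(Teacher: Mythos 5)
Your proof is correct and follows essentially the same route as the paper: the contrapositive of Theorem \ref{t:1} gives $\mathcal{A}\cap U=\emptyset$, and the complement $\mathbb{C}^{n}\setminus U=\varphi(\mathbb{C}^{n}\setminus U')$ is then decomposed into coordinate hyperplanes using formula $(1)$ and transported by the affine bijection $\varphi$. You are in fact more careful than the paper on two points it glosses over: the paper indexes the hyperplanes from $k=1$ to $r$ even though the first block contributes none (your range $k=2,\dots,r$ is the accurate one), and it never actually verifies the asserted $\mathcal{G}$-invariance of the $H_{k}$, which your block-lower-triangular argument and the conjugation $\mathcal{G}=\varphi\,\mathcal{G}'\,\varphi^{-1}$ supply.
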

\medskip

\section{\bf Preliminaries and basic notions}
 \bigskip

\begin{prop}\label{p:2} Let  $\mathcal{G}$ be an abelian sub-semigroup of
$MA(n,\mathbb{C})$ and  $G=\Phi(\mathcal{G})$. Then there exists
$P\in \Phi(GA(n,\mathbb{C}))$ such that $P^{-1}GP$ is a
sub-semigroup of $\mathcal{K}_{\eta,r}(\mathbb{C})\cap\Phi(GA(n,
\mathbb{C}))$,
 for some $r\leq n+1$ and $\eta=(n_{1},\dots,n_{r})\in\mathbb{N}_{0}^{r}$.
\end{prop}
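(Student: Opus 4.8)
The plan is to realize the asserted block form as the simultaneous generalized weight-space decomposition of the commutative algebra generated by $G$, while carefully keeping track of the affine structure encoded by $\Phi$. Since $\mathcal{G}$ is abelian and $\Phi$ is a homomorphism, $G=\Phi(\mathcal{G})$ is an abelian family of matrices in $M_{n+1}(\mathbb{C})$; let $\mathcal{A}$ denote the unital associative subalgebra of $M_{n+1}(\mathbb{C})$ it generates. Then $\mathcal{A}$ is a finite-dimensional commutative algebra, so the $\mathcal{A}$-module $\mathbb{C}^{n+1}$ splits into generalized weight spaces $\mathbb{C}^{n+1}=\bigoplus_{k=1}^{r}V_{k}$, one for each character $\chi_{k}\colon\mathcal{A}\to\mathbb{C}$ that actually occurs; on $V_{k}$ every $M\in\mathcal{A}$ acts as $\chi_{k}(M)\,I+N_{M,k}$ with $N_{M,k}$ nilpotent, and the operators $N_{M,k}$ form a commuting family. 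Writing $n_{k}=\dim V_{k}$ gives $n_{1}+\dots+n_{r}=n+1$ and $r\leq n+1$, which is exactly the data $\eta=(n_{1},\dots,n_{r})$.

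Within each block I would simultaneously strictly triangularize the commuting nilpotents $\{N_{M,k}\}_{M}$: a commuting family of nilpotent operators has a common kernel vector, so by induction on $\dim V_{k}$ it admits a common complete flag along which every $N_{M,k}$ is strictly lower triangular. In the resulting basis each $M|_{V_{k}}$ is lower triangular with constant diagonal $\chi_{k}(M)$, i.e.\ it lies in $\mathbb{T}_{n_{k}}(\mathbb{C})$; assembling the blocks yields a basis in which every element of $G$ sits in $\mathcal{K}_{\eta,r}(\mathbb{C})$.

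The main obstacle, and the part requiring genuine care, is to arrange the conjugating matrix $P$ to lie in $\Phi(GA(n,\mathbb{C}))$ rather than being an arbitrary change of basis. Here I would exploit two features of matrices of the form $\Phi(f)=\begin{bmatrix}1&0\\ a&A\end{bmatrix}$: the hyperplane $H_{0}=\{x_{1}=0\}\cong\mathbb{C}^{n}$ is invariant under all of $G$, and the functional $e_{1}^{\ast}=(1,0,\dots,0)$ is a common left eigenvector with eigenvalue $1$. The latter produces a distinguished character $\chi_{0}\equiv 1$; I would order the blocks so that its weight space $V_{0}$ comes first. Since $H_{0}$ is a submodule, the weight decomposition restricts to it, and one checks that $W_{0}:=H_{0}\cap V_{0}$ has codimension one in $V_{0}$ with the projection $V_{0}\to\mathbb{C}^{n+1}/H_{0}$ surjective; hence there is a lift $\tilde b=e_{1}+h$ with $h\in H_{0}$ spanning $V_{0}$ over $W_{0}$. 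Because $(M-I)\tilde b\in H_{0}\cap V_{0}=W_{0}$ and $(M-I)W_{0}\subseteq W_{0}$ for every $M\in G$, one gets $(M-I)V_{0}\subseteq W_{0}$, so taking $W_{0}$ as the second step of the flag in $V_{0}$ keeps $\tilde b$ out of it and makes $\tilde b$ a legitimate first basis vector.

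Finally I would build the global basis as $\tilde b$, followed by a strict-triangularizing basis of $W_{0}$, and then strict-triangularizing bases of the remaining $V_{k}$, all of which lie in $H_{0}$. Then $\tilde b=e_{1}+h$ is the only basis vector with nonzero first coordinate and that coordinate equals $1$, so the matrix $P$ with these columns has first row $(1,0,\dots,0)$, i.e.\ $P\in\Phi(GA(n,\mathbb{C}))$. Since $\Phi(GA(n,\mathbb{C}))$ is a group containing $G$ (and, in the presence of non-invertible elements, conjugation still preserves the larger set $\Phi(MA(n,\mathbb{C}))$ of matrices with first row $(1,0,\dots,0)$), conjugation by $P$ keeps $P^{-1}GP$ inside $\Phi(GA(n,\mathbb{C}))$, while the block construction places it inside $\mathcal{K}_{\eta,r}(\mathbb{C})$; together these give the asserted inclusion $P^{-1}GP\subseteq\mathcal{K}_{\eta,r}(\mathbb{C})\cap\Phi(GA(n,\mathbb{C}))$.
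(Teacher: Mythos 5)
Your argument is correct, but it takes a genuinely different route from the paper's proof of Proposition \ref{p:2}. The paper does not construct the normal form directly: when every element of $G$ is invertible it passes to the group $\widehat{G}$ generated by $G$ and invokes the already-known group version of the statement (Proposition \ref{p:002}), which supplies a conjugator $P\in\Phi(GA(n,\mathbb{C}))$ putting $\widehat{G}$, hence $G$, inside $\mathcal{K}^{*}_{\eta,r}(\mathbb{C})$; in the general case it replaces each $A\in G$ by $A-\lambda_{A}I_{n+1}$ for a non-eigenvalue $\lambda_{A}$ and uses the fact that conjugation commutes with this scalar shift. You instead reprove the normal form from scratch: the generalized weight-space decomposition of $\mathbb{C}^{n+1}$ under the commutative algebra generated by $G$, simultaneous strict triangularization of the commuting nilpotent parts in each block, and a separate argument (via the $G$-invariant hyperplane $H_{0}$ and the common left eigenvector $e_{1}^{\ast}$) showing the basis can be chosen with exactly one vector of first coordinate $1$, which is what forces $P\in\Phi(GA(n,\mathbb{C}))$. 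The paper's route buys brevity at the cost of resting entirely on the cited result; yours buys self-containedness, and it is more robust precisely where the paper's reduction is fragile: the shifted family $\{A-\lambda_{A}I_{n+1}\}$ no longer consists of matrices with first row $(1,0,\dots,0)$, so Proposition \ref{p:002} --- which is what guarantees $P\in\Phi(GA(n,\mathbb{C}))$ rather than merely $P\in GL(n+1,\mathbb{C})$ --- does not literally apply to it, whereas your direct construction of $P$ needs no case distinction. Your parenthetical remark that, when $\mathcal{G}$ contains non-invertible maps, the conclusion must be read with $\Phi(MA(n,\mathbb{C}))$ in place of $\Phi(GA(n,\mathbb{C}))$ is also the correct reading of the statement.
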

\medskip

To prove Proposition ~\ref{p:2}, we need the following
proposition.
\medskip

\begin{prop}\label{p:002}$($\cite{AA12}, Proposition 2.1$)$ Let  $\mathcal{G}$ be an abelian subgroup of
$GA(n,\mathbb{C})$ and  $G=\Phi(\mathcal{G})$. Then there exists
$P\in \Phi(GA(n,\mathbb{C}))$ such that $P^{-1}GP$ is a subgroup
of $\mathcal{K}^{*}_{\eta,r}(\mathbb{C})\cap\Phi(GA(n,
\mathbb{C}))$,
 for some $r\leq n+1$ and $\eta=(n_{1},\dots,n_{r})\in\mathbb{N}_{0}^{r}$.
\end{prop}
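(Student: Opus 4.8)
The plan is to realize the conjugation $P^{-1}GP\subset\mathcal{K}^{*}_{\eta,r}(\mathbb{C})$ as an instance of simultaneous triangularization of a commuting family of invertible operators over the algebraically closed field $\mathbb{C}$, organized by the primary (generalized eigenspace) decomposition, and then to upgrade the triangularizing matrix to an affine one. Set $V:=\mathbb{C}^{n+1}$. Since $G=\Phi(\mathcal{G})$ is an abelian subgroup of $GL(n+1,\mathbb{C})$, I first record the joint primary decomposition: the commuting operators $\{g:g\in G\}$ admit common generalized eigenspaces, so $V=\bigoplus_{k=1}^{r}W_{k}$, where each $W_{k}$ is $G$-invariant, $\dim W_{k}=n_{k}$, and there is a character $\chi_{k}:G\to\mathbb{C}^{*}$ with $(g-\chi_{k}(g)\,\mathrm{Id})|_{W_{k}}$ nilpotent for every $g$. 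Existence follows by decomposing under one operator into generalized eigenspaces and refining, using that commuting operators preserve each other's generalized eigenspaces. This produces the partition $\eta=(n_{1},\dots,n_{r})$ of $n+1$ with $r\le n+1$.

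Next I would triangularize inside each block. On $W_{k}$ write $g|_{W_{k}}=\chi_{k}(g)\,\mathrm{Id}+M_{g}$ with $M_{g}$ nilpotent; as $G$ is abelian the operators $g|_{W_{k}}$, hence the nilpotents $M_{g}$, commute. A commuting family on a finite dimensional complex space admits a common complete invariant flag, and a basis adapted to such a flag makes every $M_{g}$ strictly lower triangular, so each $g|_{W_{k}}$ becomes lower triangular with constant diagonal $\chi_{k}(g)\neq 0$, i.e. lies in $\mathbb{T}^{*}_{n_{k}}(\mathbb{C})$. Assembling the blocks in the order $W_{1},\dots,W_{r}$ gives a matrix $P_{0}\in GL(n+1,\mathbb{C})$ with $P_{0}^{-1}GP_{0}\subset\mathcal{K}^{*}_{\eta,r}(\mathbb{C})$.

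The main obstacle is to force the conjugating matrix to be affine, i.e. to lie in $\Phi(GA(n,\mathbb{C}))$; the bare triangularization only yields $P_{0}\in GL(n+1,\mathbb{C})$. The key remark is that $P\in\Phi(GA(n,\mathbb{C}))$ if and only if $P$ is invertible with first row $(1,0,\dots,0)$, equivalently $\ell P=\ell$ for the functional $\ell=e_{1}^{*}$, $\ell(x)=x_{1}$. Now every $g\in G$ has first row $(1,0,\dots,0)$ by definition of $\Phi$, so $\ell$ is a common left eigenvector of $G$ with eigenvalue $1$; dually $\ell$ is a genuine common eigenvector of the commuting transposes $\{g^{T}\}$ with eigenvalue $1$, hence the character $\chi\equiv 1$ actually occurs. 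Reordering so that $\chi_{1}\equiv 1$, the functional $\ell$ lies in the summand of $V^{*}$ dual to $W_{1}$, which forces $\ell$ to vanish on $W_{2},\dots,W_{r}$; thus $W_{2},\dots,W_{r}\subset H:=\ker\ell=\{x_{1}=0\}$.

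It remains to choose the flag inside $W_{1}$ compatibly. Since $H$ and $W_{1}$ are both $G$-invariant and $\ell|_{W_{1}}\neq 0$, the hyperplane $F:=H\cap W_{1}$ of $W_{1}$ (codimension one) is $G$-invariant. I would build a common complete invariant flag of $W_{1}$ descending through $F$: take a complete invariant flag of $G|_{F}$ and cap it with $W_{1}\supset F$. Choosing the leading basis vector $v_{1}\in W_{1}\setminus F$ with $\ell(v_{1})=1$ and the remaining vectors of $W_{1}$ inside $F\subset H$, together with any bases of $W_{2},\dots,W_{r}\subset H$, yields a basis whose change-of-basis matrix $P$ has first column with top entry $\ell(v_{1})=1$ and every other column with top entry $0$, since all those vectors lie in $H=\ker\ell$. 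Hence the first row of $P$ is $(1,0,\dots,0)$, so $P\in\Phi(GA(n,\mathbb{C}))$, while $P^{-1}GP\subset\mathcal{K}^{*}_{\eta,r}(\mathbb{C})$ exactly as in the second step; and since $\Phi(GA(n,\mathbb{C}))$ is a group containing both $G$ and $P$, the conjugate lies in $\Phi(GA(n,\mathbb{C}))$ as well, giving $P^{-1}GP\subset\mathcal{K}^{*}_{\eta,r}(\mathbb{C})\cap\Phi(GA(n,\mathbb{C}))$. I expect the delicate point to be precisely this final alignment: arranging the eigenvalue-$1$ block to be triangularized through the invariant affine hyperplane $H$ so that the change of basis is itself affine.
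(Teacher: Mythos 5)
This proposition is never proved in the paper: it is quoted as Proposition 2.1 of \cite{AA12} and used as a black box, so there is no in-paper argument to compare yours against. Judged on its own terms, your proof is correct and self-contained.

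The chain of steps works: the joint primary decomposition $\mathbb{C}^{n+1}=\bigoplus_{k=1}^{r}W_{k}$ with characters $\chi_{k}$; the observation that every $g\in G=\Phi(\mathcal{G})$ satisfies $\ell\circ g=\ell$ for $\ell=e_{1}^{*}$, so the character $\chi\equiv 1$ occurs and, after reordering, $W_{2},\dots,W_{r}\subset H=\ker\ell$; triangularization of each block along a common invariant flag, the flag in $W_{1}$ being chosen through the $G$-invariant hyperplane $F=W_{1}\cap H$ with leading vector $v_{1}$ normalized by $\ell(v_{1})=1$. This yields an invertible $P$ whose first row is $(1,0,\dots,0)$, i.e. $P\in\Phi(GA(n,\mathbb{C}))$, with $P^{-1}GP\subset\mathcal{K}^{*}_{\eta,r}(\mathbb{C})$; and since $\Phi(GA(n,\mathbb{C}))$ is a group containing both $G$ and $P$, the conjugate lies in the stated intersection. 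Two points should be made explicit in a final write-up. First, the duality step (``$\ell$ lies in the summand dual to $W_{1}$'') is valid only when the characters $\chi_{1},\dots,\chi_{r}$ are pairwise distinct, so you must take the canonical decomposition into joint generalized eigenspaces rather than an arbitrary invariant refinement: writing $\ell=\sum_{k}\ell_{k}$ with $\ell_{k}$ in the annihilator of $\bigoplus_{j\neq k}W_{j}$, each $\ell_{k}$ is fixed by every $g^{T}$, while $g^{T}-I_{n+1}$ is invertible on the $k$-th dual summand whenever $\chi_{k}(g)\neq 1$; this kills $\ell_{k}$ for $k\geq 2$, and it is precisely here that distinctness of the characters is used. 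Second, placing the character-$1$ block first is forced rather than a convenience: any element of $\mathcal{K}^{*}_{\eta,r}(\mathbb{C})\cap\Phi(GA(n,\mathbb{C}))$ must have its first diagonal block with constant diagonal $1$, and your construction delivers exactly this, since $\ell(gv_{1})=\ell(v_{1})=1$ and all eigenvalues of $g|_{F}$ equal $\chi_{1}(g)=1$.
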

\medskip

\begin{proof}[Proof of Proposition ~\ref{p:2}] Suppose first,  $G\subset \textrm{GL}(n+1, \mathbb{C})$.
 Let $\widehat{G}$ be the group
generated by $G$. Then $\widehat{G}$ is abelian and by Proposition
~\ref{p:002}, there exists a $P\in \Phi(GA(n,\mathbb{C}))$ such
that $P^{-1}\widehat{G}P$  is an abelian subgroup of
$\mathcal{K}_{\eta,r}^{*}(\mathbb{C})$, for some
$r\in\{1,\dots,n+1\}$ and $\eta\in(\mathbb{N}_{0})^{r}$. In
particular, $P^{-1}GP\subset
\mathcal{K}_{\eta,r}^{*}(\mathbb{C})$.

Suppose now, $G\subset M_{n+1}(\mathbb{C})$. For every $A\in G$,
there exists $\lambda_{A}\in\mathbb{C}$ such that
$(A-\lambda_{A}I_{n+1})\in \textrm{GL}(n+1, \mathbb{C})$ (one can
take $\lambda_{A}$ non eigenvalue of $A$). Write $\widehat{L}$ be
the group generated by $L:=\left\{A-\lambda_{A}I_{n+1}: \ A\in
G\right\}$. Then $\widehat{L}$ is an abelian subsemigroup of
$GL(n+1, \mathbb{C})$. Hence by above, there exists a $P\in
\Phi(GA(n,\mathbb{C}))$ such that $P^{-1}\widehat{L}P\subset
\mathcal{K}_{\eta,r}^{*}(\mathbb{C})$, for some
$\eta\in(\mathbb{N}_{0})^{r}$. As $$ P^{-1}LP
=\left\{P^{-1}AP-\lambda_{A}I_{n+1}: \ A\in G\right\}$$ then
$P^{-1}GP\subset \mathcal{K}_{\eta,r}(\mathbb{C})$. This proves
the proposition.
\end{proof}

\bigskip

\bigskip

Let $\widetilde{G}$ be the semigroup generated by $G$ and
$\mathbb{C}I_{n+1}=\{\lambda I_{n+1}:\  \  \ \lambda\in \mathbb{C}
\}$. Then $\widetilde{G}$ is an abelian sub-semigroup of
$M(n+1,\mathbb{C})$. By Proposition~\ref{p:2}, there exists
$P\in\Phi(GA(n, \mathbb{C}))$ such that $P^{-1}GP$ is a
sub-semigroup of $\mathcal{K}_{\eta,r}(\mathbb{C})$ for some
$r\leq n+1$ and $\eta=(n_{1},\dots,n_{r})\in\mathbb{N}_{0}^{r}$
and this also implies that $P^{-1}\widetilde{G}P$ is a
sub-semigroup of $\mathcal{K}_{\eta,r}(\mathbb{C})$. \ \\ \\
\medskip

\begin{lem}\label{LL1L:9} Let $x\in\mathbb{C}^{n}$ and $G=\Phi(\mathcal{G})$. The following are equivalent:\
\begin{itemize}
  \item [(i)] $\overline{\mathcal{G}(x)}=\mathbb{C}^{n}$.
  \item [(ii)] $\overline{G(1,x)}=\{1\}\times\mathbb{C}^{n}$.
  \item [(iii)] $\overline{\widetilde{G}(1,x)}=\mathbb{C}^{n+1}$.
\end{itemize}
\end{lem}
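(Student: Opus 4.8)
The plan is to reduce everything to one computation describing how $\Phi$ transports the affine action of $\mathcal{G}$ to the linear action of $G$ on the hyperplane $\{1\}\times\mathbb{C}^{n}$. For $f=(A,a)\in\mathcal{G}$ and $x\in\mathbb{C}^{n}$ I would use the block form of $\Phi$ to get
$$\Phi(f)(1,x)=\begin{bmatrix}1&0\\ a&A\end{bmatrix}\begin{bmatrix}1\\ x\end{bmatrix}=(1,Ax+a)=(1,f(x)),$$
so that $G(1,x)=\{1\}\times\mathcal{G}(x)$. Taking closures in $\mathbb{C}^{n+1}$ yields $\overline{G(1,x)}=\{1\}\times\overline{\mathcal{G}(x)}$, and since the right-hand side equals $\{1\}\times\mathbb{C}^{n}$ exactly when $\overline{\mathcal{G}(x)}=\mathbb{C}^{n}$, the equivalence (i)$\Leftrightarrow$(ii) is immediate.

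Next I would record the structure of $\widetilde{G}$. Because the scalars $\mathbb{C}I_{n+1}$ commute with every element of $G$ and are closed under multiplication, while $G=\Phi(\mathcal{G})$ is itself a semigroup, each element of $\widetilde{G}$ can be written as $\lambda h$ with $\lambda\in\mathbb{C}$ and $h\in G$ (or $h=I_{n+1}$ in the scalar-only case). Combined with the computation above, this gives the explicit description
$$\widetilde{G}(1,x)=\{(\lambda,\lambda w):\ \lambda\in\mathbb{C},\ w\in S\},\qquad S:=\mathcal{G}(x)\cup\{x\},$$
so that $\widetilde{G}(1,x)$ is precisely the union of the complex lines through the origin and the points $(1,w)$, $w\in S$. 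Note that $\overline{S}=\mathbb{C}^{n}$ and $\overline{\mathcal{G}(x)}=\mathbb{C}^{n}$ are equivalent, since the two sets differ by at most the single point $x$ and $\overline{\mathcal{G}(x)}$ is closed (for $n\geq 1$, a dense-minus-a-point set is not closed).

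For (ii)$\Rightarrow$(iii) I would assume $\overline{\mathcal{G}(x)}=\mathbb{C}^{n}$, hence $\overline{S}=\mathbb{C}^{n}$, and approximate an arbitrary $(s,z)\in\mathbb{C}^{n+1}$ by points of $\widetilde{G}(1,x)$. When $s\neq 0$ I choose $w_{k}\in S$ with $w_{k}\to z/s$, so that $(s,sw_{k})\to(s,z)$. When $s=0$ I let $\lambda_{k}\to 0$ and choose $w_{k}\in S$ within distance $1$ of $z/\lambda_{k}$, whence $|\lambda_{k}w_{k}-z|\leq|\lambda_{k}|\to 0$ and $(\lambda_{k},\lambda_{k}w_{k})\to(0,z)$. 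This proves $\overline{\widetilde{G}(1,x)}=\mathbb{C}^{n+1}$. Conversely, for (iii)$\Rightarrow$(ii) I assume $\overline{\widetilde{G}(1,x)}=\mathbb{C}^{n+1}$; given $y\in\mathbb{C}^{n}$, I take a sequence $(\lambda_{k},\lambda_{k}w_{k})\to(1,y)$ in $\widetilde{G}(1,x)$, observe that $\lambda_{k}\to 1\neq 0$ forces $w_{k}\to y$, and conclude $y\in\overline{S}$. Thus $\overline{S}=\mathbb{C}^{n}$, which as noted gives (ii).

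The one genuinely delicate point is the case $s=0$ in (ii)$\Rightarrow$(iii): there I must exploit that density of the orbit in $\mathbb{C}^{n}$ furnishes orbit points escaping to infinity along any prescribed direction, so that multiplying by a vanishing scalar still lands near the target $(0,z)$. Everything else is a routine translation between the affine orbit of $\mathcal{G}$ and the linear orbits of $G$ and $\widetilde{G}$ through the marked vector $(1,x)$.
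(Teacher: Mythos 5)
Your proof is correct and takes essentially the same route as the paper's: the computation $\Phi(f)(1,x)=(1,f(x))$ gives (i)$\Leftrightarrow$(ii), writing each element of $\widetilde{G}$ as $\lambda h$ with $h\in G$ and dividing by $\lambda\to 1$ gives (iii)$\Rightarrow$(ii), and (ii)$\Rightarrow$(iii) comes from sweeping out $\mathbb{C}^{n+1}$ by scalar multiples of $\{1\}\times\mathbb{C}^{n}$. The only cosmetic difference is that the paper handles the stratum $\{0\}\times\mathbb{C}^{n}$ by observing that its complement is already dense in $\mathbb{C}^{n+1}$, whereas you approximate its points directly via the estimate $\left|\lambda_{k}w_{k}-z\right|\leq\left|\lambda_{k}\right|$; both are valid.
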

\medskip

\begin{proof} $(i)\Longleftrightarrow (ii):$  is obvious since $\{1\}\times\mathcal{G}(x)=G(1,x)$ by construction.
\
\\
$(iii)\Longrightarrow (ii):$  Let $y\in \mathbb{C}^{n}$ and
$(B_{m})_{m}$ be a sequence in $\widetilde{G}$ such that
$\underset{m\to +\infty}{lim}B_{m}(1,x)=(1,y)$. One can write
$B_{m}=\lambda_{m}\Phi(f_{m})$, with $f_{m}\in \mathcal{G}$ and
$\lambda_{m}\in\mathbb{C}^{*}$, thus $B_{m}(1,x)=(\lambda_{m},\ \
\lambda_{m}f_{m}(x))$, so $\underset{m\to
+\infty}{lim}\lambda_{m}=1$. Therefore, $\underset{m\to
+\infty}{lim}\Phi(f_{m})(1,x)=\underset{m\to
+\infty}{lim}\frac{1}{\lambda_{m}}B_{m}(1,x)=(1,y)$. Hence,
$(1,y)\in \overline{G(1,x)}$. \
\\
$(ii)\Longrightarrow (iii):$ Since
$\mathbb{C}^{n+1}\backslash(\{0\}\times
\mathbb{C}^{n})=\underset{\lambda\in
\mathbb{C}^{*}}{\bigcup}\lambda
\left(\{1\}\times\mathbb{C}^{n}\right)$
 and for every $\lambda\in\mathbb{C}^{*}$, $\lambda G(1, x)\subset \widetilde{G}(1,x)$, we get \begin{align*}
\mathbb{C}^{n+1} & =
\overline{\mathbb{C}^{n+1}\backslash(\{0\}\times
\mathbb{C}^{n})}\\ \ & =\overline{\underset{\lambda\in
\mathbb{C}^{*}}{\bigcup}\lambda
\left(\{1\}\times\mathbb{C}^{n}\right)}\\ \ & =
\overline{\underset{\lambda\in \mathbb{C}^{*}}{\bigcup}\lambda
\overline{G(1,x)}} \subset \overline{\widetilde{G}(1,x)}
\end{align*}
Hence $\mathbb{C}^{n+1}=\overline{\widetilde{G}(1, x)}$.
\end{proof}
\bigskip

\ We will use the following theorem, to prove Lemma ~\ref{L:1}:
\bigskip

\begin{thm}\label{T:5}$($\cite{aAh-M12},\ Theorem 1.1$)$   Let $\widetilde{G}$ be a finitely generated abelian semigroup of matrices
 of $M_{n+1}(\mathbb{C})$.
 If $J_{\widetilde{G}}(v)=\mathbb{C}^{n+1}$ for some $v\in V$ then
$\overline{\widetilde{G}(v)} = \mathbb{C}^{n+1}$.
\end{thm}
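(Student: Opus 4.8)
The plan is to pass to the block normal form of Proposition~\ref{p:2}, record what membership in the region $V$ buys us geometrically, and then transfer the local (J-set) information to the genuine orbit of $v$ by absorbing the perturbation of the base point into a commuting element close to the identity. First I would normalize: there is $P\in\Phi(GA(n,\mathbb{C}))$ with $P^{-1}\widetilde{G}P\subset\mathcal{K}_{\eta,r}(\mathbb{C})$, and since the simultaneous conjugation $A\mapsto P^{-1}AP$, $x\mapsto P^{-1}x$ is a homeomorphism intertwining the semigroup action, it preserves both the hypothesis $J_{\widetilde{G}}(v)=\mathbb{C}^{n+1}$ and the desired conclusion $\overline{\widetilde{G}(v)}=\mathbb{C}^{n+1}$, while carrying $V=P(V')$ onto $V'$. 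So I may assume $\widetilde{G}\subset\mathcal{K}_{\eta,r}(\mathbb{C})$ and $v\in V'$.

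Writing $v=(v^{(1)},\dots,v^{(r)})$ with $v^{(k)}\in\mathbb{C}^{n_{k}}$, membership $v\in V'$ means each leading entry $v^{(k)}_{1}$ is nonzero. A direct block computation then shows $v$ is a cyclic vector for the algebra $\mathcal{K}_{\eta,r}(\mathbb{C})$: inside block $k$ the strictly lower triangular matrices applied to $v^{(k)}$ already sweep out $\{0\}\times\mathbb{C}^{n_{k}-1}$ precisely because $v^{(k)}_{1}\neq0$, and adding the identity recovers the missing leading direction. Equivalently, $V'$ is a single orbit $\mathcal{K}^{*}_{\eta,r}(\mathbb{C})\,v$, hence an open homogeneous set, and the orbit map $h\mapsto hv$ from $\mathcal{K}^{*}_{\eta,r}(\mathbb{C})$ is a submersion admitting a local section through the identity. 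Thus for any $x$ near $v$ in $V'$ one can solve $x=hv$ with $h\in\mathcal{K}^{*}_{\eta,r}(\mathbb{C})$ and $h$ close to $I$.

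Now the transfer. Fix a target $y\in\mathbb{C}^{n+1}$; the hypothesis furnishes $x_{m}\to v$, elements $g_{m}\in\widetilde{G}$ with total exponent tending to $+\infty$, and $g_{m}x_{m}\to y$. By the previous paragraph, for large $m$ I can write $x_{m}=h_{m}v$ with $h_{m}\to I$. The point is that \emph{if} the $h_{m}$ can be taken to commute with $g_{m}$, then $g_{m}v=h_{m}^{-1}g_{m}x_{m}$, and since $h_{m}^{-1}\to I$ while $g_{m}x_{m}\to y$ we would get $g_{m}v\to y$, so $y\in\overline{\widetilde{G}(v)}$; as $y$ is arbitrary this yields $\overline{\widetilde{G}(v)}=\mathbb{C}^{n+1}$, and together with Lemma~\ref{LL1L:9} this is exactly the content required downstream.

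The hard part is producing the \emph{commuting} perturbation $h_{m}$. Realizing $x_{m}=h_{m}v$ inside the full group $\mathcal{K}^{*}_{\eta,r}(\mathbb{C})$ is immediate, but that group is noncommutative as soon as a block has size $\geq3$, so the swap $g_{m}h_{m}=h_{m}g_{m}$ may fail; realizing $h_{m}$ instead inside the commutative algebra $\mathcal{A}$ generated by $\widetilde{G}$ preserves commutativity but now demands that $v$ be cyclic for $\mathcal{A}$ itself, not merely for the ambient $\mathcal{K}_{\eta,r}(\mathbb{C})$. This is precisely where the strong hypothesis $J_{\widetilde{G}}(v)=\mathbb{C}^{n+1}$ (rather than a weaker orbit condition) must be spent, and I expect this cyclicity-plus-commuting-perturbation step to be the technical core. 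I would attempt it by induction on the number of blocks $r$ (equivalently on $n$): use the nonvanishing leading coordinates $v^{(k)}_{1}$ to peel off the action on the top rows, reduce the J-condition to a strictly lower-dimensional instance, find commuting elements there, and lift them back into $\mathcal{K}_{\eta,r}(\mathbb{C})$. Once that lemma is secured the argument above closes, recovering the statement cited from \cite{aAh-M12}.
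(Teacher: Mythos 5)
The first thing to say is that the paper contains no proof of Theorem~\ref{T:5}: it is imported verbatim from \cite{aAh-M12} (Theorem~1.1 there) and used as a black box in the proof of Theorem~\ref{t:1}. So there is no internal argument to compare yours against; what you have written is an attempted reconstruction of the proof of the cited external result.

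Judged on its own terms, your outline assembles the right geometric ingredients --- reduction to the normal form $\mathcal{K}_{\eta,r}(\mathbb{C})$, the observation that $V'$ is the open orbit $\mathcal{K}^{*}_{\eta,r}(\mathbb{C})v$ with a local section $x\mapsto h$ through the identity, and the transfer $g_{m}v=h_{m}^{-1}g_{m}x_{m}\to y$ once $h_{m}$ commutes with $g_{m}$ --- but it does not close. As you yourself say, everything hinges on producing $h_{m}\to I$ with $h_{m}v=x_{m}$ inside the \emph{commutant} of $\widetilde{G}$, i.e.\ on $v$ being cyclic for the commutative algebra generated by $\widetilde{G}$ and the identity rather than for the ambient, noncommutative $\mathcal{K}_{\eta,r}(\mathbb{C})$; and that lemma is only named (``induction on $r$, peel off the top rows''), not proved. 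This is a genuine gap and not a routine verification: membership $v\in V$ alone does not give cyclicity for the commutant --- for $\widetilde{G}\subset\mathbb{C}^{*}I_{n+1}$ one has $r=n+1$, $V'=(\mathbb{C}^{*})^{n+1}$, yet the algebra generated by $\widetilde{G}$ sends $v$ only onto $\mathbb{C}v$ --- so the hypothesis $J_{\widetilde{G}}(v)=\mathbb{C}^{n+1}$ must be converted into algebraic information about the commutant's action on $v$ (in the scalar example that hypothesis indeed fails, since $J_{\widetilde{G}}(v)\subset\mathbb{C}v$). That conversion is precisely the technical content of \cite{aAh-M12}, and until it is supplied your argument establishes nothing beyond the reduction to normal form. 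If the intent is only to use Theorem~\ref{T:5} as the paper does, the honest course is to cite it; if the intent is to reprove it, the commuting-perturbation lemma must actually be carried out.
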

\bigskip

\begin{lem}\label{L:1} Let $\mathcal{G}$ be an abelian semigroup
of affine maps generated by $f_{1},\dots, f_{p}$ and $x\in
\mathbb{C}^{n}$. Then the following assertion are equivalent:\
\\ (i) $y\in J_{\mathcal{G}}(x)$.\ \\ (ii) $(1,y)\in
J_{\widetilde{G}}(1,x)$.
\end{lem}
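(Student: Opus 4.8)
The plan is to run everything off the single identity $\Phi(g)(1,x)=(1,g(x))$ for $g\in\mathcal{G}$, together with its homogeneous refinement $\Phi(g)(\alpha,x)=\alpha\,(1,g(x/\alpha))$, valid for every $\alpha\in\mathbb{C}^{*}$; both follow immediately from the composition formula for $\Phi$ and from $\Phi$ being a homomorphism. Since $\Phi(f_{1}^{k_{1}}\cdots f_{p}^{k_{p}})=\Phi(f_{1})^{k_{1}}\cdots\Phi(f_{p})^{k_{p}}$, the exponents $k^{(j)}_{m}$ attached to the affine generators $f_{j}$ are literally the same as those attached to the matrix generators $\Phi(f_{j})$; hence the index condition $k^{(1)}_{m}+\dots+k^{(p)}_{m}\to+\infty$ transfers verbatim between the two pictures, and the whole content of the lemma is the bookkeeping of the scalar factor and of the first coordinate.

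For $(i)\Rightarrow(ii)$ I would simply lift the data. Given $x_{m}\to x$ and exponents with $\sum_{j}k^{(j)}_{m}\to+\infty$ such that $g_{m}(x_{m})\to y$, where $g_{m}=f_{1}^{k^{(1)}_{m}}\cdots f_{p}^{k^{(p)}_{m}}$, set $v_{m}=(1,x_{m})\to(1,x)$. Then $\Phi(g_{m})v_{m}=(1,g_{m}(x_{m}))\to(1,y)$, and $\Phi(g_{m})=\Phi(f_{1})^{k^{(1)}_{m}}\cdots\Phi(f_{p})^{k^{(p)}_{m}}\in G\subset\widetilde{G}$ is a product of the generators with total exponent tending to $+\infty$ (take the scalar factor equal to $1$). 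This exhibits $(1,y)\in J_{\widetilde{G}}(1,x)$.

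The direction $(ii)\Rightarrow(i)$ is where the actual work is, and here the homogeneous identity is the key. Suppose $(1,y)\in J_{\widetilde{G}}(1,x)$, witnessed by $v_{m}=(\alpha_{m},x_{m})\to(1,x)$ and by elements $B_{m}\in\widetilde{G}$ with index tending to $+\infty$ and $B_{m}v_{m}\to(1,y)$. Writing each $B_{m}=\lambda_{m}\Phi(g_{m})$ with $\lambda_{m}\in\mathbb{C}^{*}$ and $g_{m}=f_{1}^{k^{(1)}_{m}}\cdots f_{p}^{k^{(p)}_{m}}$, $\sum_{j}k^{(j)}_{m}\to+\infty$, I note that $\alpha_{m}\to1$, so $\alpha_{m}\neq0$ for all large $m$ and the homogeneous identity gives $B_{m}v_{m}=\lambda_{m}\alpha_{m}\,(1,g_{m}(x_{m}/\alpha_{m}))$. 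Reading off the first coordinate yields $\lambda_{m}\alpha_{m}\to1$, and then the second coordinate yields $g_{m}(x_{m}/\alpha_{m})\to y$. Setting $\tilde{x}_{m}=x_{m}/\alpha_{m}$, one has $\tilde{x}_{m}\to x$ (since $x_{m}\to x$ and $\alpha_{m}\to1$) and $g_{m}(\tilde{x}_{m})\to y$ with $\sum_{j}k^{(j)}_{m}\to+\infty$, which is exactly $y\in J_{\mathcal{G}}(x)$.

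The only genuine obstacle is the appearance of the scalar $\lambda_{m}$ and of a first coordinate $\alpha_{m}\ne1$ in the witnessing data for $J_{\widetilde{G}}$; a naive choice $\tilde{x}_{m}=x_{m}$ fails because the discrepancy $(1-\alpha_{m})a_{m}$, with $a_{m}$ the translation part of $g_{m}$, need not vanish when the translation parts are unbounded, even though $1-\alpha_{m}\to0$. The reparametrisation $\tilde{x}_{m}=x_{m}/\alpha_{m}$, forced by the homogeneity of the $\Phi$-action on the cone over $\{1\}\times\mathbb{C}^{n}$, absorbs both $\lambda_{m}$ and $\alpha_{m}$ simultaneously and removes the difficulty; one need only record that $\alpha_{m}\neq0$ eventually, which is automatic from $\alpha_{m}\to1$.
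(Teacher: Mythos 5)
Your proof is correct and follows the same overall strategy as the paper's (transfer the data through $\Phi$, then absorb the scalar factor), but your treatment of the direction $(ii)\Rightarrow(i)$ differs at the one point that matters, and it is the more careful of the two. The paper writes the witnessing elements as $c_{m}\Phi(g'_{m})$ with $c_{m}=\alpha_{1}^{k^{(1)}_{m}}\cdots\alpha_{p}^{k^{(p)}_{m}}$ and $g'_{m}=f_{1}^{k^{(1)}_{m}}\cdots f_{p}^{k^{(p)}_{m}}$, deduces $c_{m}\to 1$ from the first coordinate, and then passes from the convergence of $\frac{1}{c_{m}}g_{m}(\lambda_{m},x_{m})$ to that of $\frac{1}{c_{m}}g_{m}(1,x_{m})$ without comment. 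The difference of these two vectors is, up to the factor $c_{m}$, equal to $(\lambda_{m}-1)(1,a'_{m})$, where $a'_{m}$ is the translation part of $g'_{m}$; since the translation parts of the iterates need not stay bounded, this need not tend to $0$ --- precisely the obstacle you describe in your closing paragraph. Your substitute, the homogeneity identity $\Phi(g)(\alpha,x)=\alpha\,(1,g(x/\alpha))$ together with the reparametrised base points $\tilde{x}_{m}=x_{m}/\alpha_{m}$, absorbs the scalar $\lambda_{m}$ and the first coordinate $\alpha_{m}$ simultaneously and closes this gap at no cost, since $\alpha_{m}\to 1$ guarantees both $\tilde{x}_{m}\to x$ and $\alpha_{m}\neq 0$ eventually (and $\lambda_{m}\neq 0$ eventually because $B_{m}v_{m}$ converges to a vector with first coordinate $1$). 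The only loose end, inherited from the paper rather than introduced by you, is that $\widetilde{G}$ is not finitely generated as defined, so one must --- as both you and the paper implicitly do --- read the index condition in $J_{\widetilde{G}}$ as bearing on the exponents of the $\Phi(f_{j})$ alone.
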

\medskip

\begin{proof} $(i)\Longrightarrow (ii):$  Since $y\in J_{\mathcal{G}}(x)$, then there exists a sequence of vectors $(x_{m})_{m}$ with
$x_{m}\longrightarrow x$ and sequences of non-negative integers
$\{k^{(j)}_{m}:\ m\in \mathbb{N}\}$ for $j = 1,2,\dots, p$ with $$
(1.1)\ \ \ \ \ \ k^{(1)}_{m} + k^{(2)}_{m}+\dots+
k^{(p)}_{m}\longrightarrow+\infty$$ such that
$$f_{1}^{k^{(1)}_{m}}f_{2}^{k^{(2)}_{m}}\dots
f_{p}^{k^{(p)}_{m}}x_{m}\longrightarrow y.$$

Therefore, $(1,x_{m})\longrightarrow (1,x)$ and sequences of
non-negative integers $\{k^{(j)}_{m}:\ m\in \mathbb{N}\}$ for $j =
1,2,\dots, p$  such that
$$\Phi(f_{1})^{k^{(1)}_{m}}\Phi(f_{2})^{k^{(2)}_{m}}\dots
\Phi(f_{p})^{k^{(p)}_{m}}(1,x_{m})\longrightarrow (1,y).$$ Hence
$(1,y)\in J_{\widetilde{G}}(1,x)$.\ \\ $(ii)\Longrightarrow (i):$
Since $(1,y)\in J_{\mathcal{G}}(1,x)$, then there exists a
sequence of vectors $(\lambda_{m}, x_{m})_{m}\subset
\mathbb{C}\times \mathbb{C}^{n}$ with $(\lambda_{m},
x_{m})\longrightarrow (1,x)$ and sequences of non-negative
integers $\{k^{(j)}_{m}:\ m\in \mathbb{N}\}$ for $j = 1,2,\dots,
p$ with $$ (1.2)\ \ \ \ \ \ k^{(1)}_{m} + k^{(2)}_{m}+\dots+
k^{(p)}_{m}\longrightarrow+\infty$$ such that
$$\alpha_{1}^{k^{(1)}_{m}}\dots\alpha_{p}^{k^{(p)}_{m}}\Phi(f_{1})^{k^{(1)}_{m}}\Phi(f_{2})^{k^{(2)}_{m}}\dots
\Phi(f_{p})^{k^{(p)}_{m}}(\lambda_{m},x_{m})\longrightarrow
(1,y).$$

Denote by
$c_{m}=\alpha_{1}^{k^{(1)}_{m}}\dots\alpha_{p}^{k^{(p)}_{m}}$ and
$$g_{m}=\alpha_{1}^{k^{(1)}_{m}}\dots\alpha_{p}^{k^{(p)}_{m}}\Phi(f_{1})^{k^{(1)}_{m}}\Phi(f_{2})^{k^{(2)}_{m}}\dots
\Phi(f_{p})^{k^{(p)}_{m}}.$$ Then $\underset{m\to
+\infty}{lim}c_{m}\lambda_{m}=1$. As $\underset{m\to
+\infty}{lim}\lambda_{m}=1$, so $\underset{m\to
+\infty}{lim}c_{m}=1$. It follows that $\underset{m\to
+\infty}{lim}\frac{1}{c_{m}}g_{m}(1,x_{m})=(1,y)$. Hence
$\underset{m\to
+\infty}{lim}f_{1}^{k^{(1)}_{m}}f_{2}^{k^{(2)}_{m}}\dots
f_{p}^{k^{(p)}_{m}}(x_{m})=y$.
\end{proof}
\bigskip

\medskip

\begin{proof}[Proof of Theorem~\ref{t:1}] Suppose that
$J_{\mathcal{G}}(v)=\mathbb{C}^{n}$  with $v\in U$. By
Proposition~\ref{p:2}, we can assume that $G\subset
\mathcal{K}_{\eta,r}(\mathbb{C})$ and $P=I_{n+1}$. By
Lemma~\ref{L:1}, we have
$J_{\widetilde{G}}(1,v)=\mathbb{C}^{n+1}$. Then by
Theorem~\ref{T:5},
$\overline{\widetilde{G}(1,v)}=\mathbb{C}^{n+1}$. It follows by
Lemma~\ref{LL1L:9}, that
$\overline{\mathcal{G}(v)}=\mathbb{C}^{n}$.
\end{proof}
\bigskip

Now to prove Corollary ~\ref{c:1}, we need to use the following
proposition:

\begin{prop}\label{p:101}\cite{chm2} Let $G$ be an abelian sub-semigroup of $M_{n+1}(\mathbb{C})$ generated by
 $A_{1},\dots,A_{p}$, $p\geq 1$. Then $G$ is hypercyclic if and only if
 $J_{G}(x)= \mathbb{C}^{n+1}$ for every $x\in\mathbb{C}^{n+1}$.
\end{prop}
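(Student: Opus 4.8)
The plan is to prove the two implications separately, in each case reducing to the Birkhoff transitivity theorem on the perfect Polish space $\mathbb{C}^{n+1}$ together with the elementary properties of the extended limit sets $J_{G}(x)$. Since $G$ is finitely generated it is countable, so the Baire category machinery applies to the countable family of continuous maps $\{g:g\in G\}$, and $\mathbb{C}^{n+1}$ has no isolated points.

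For the implication $(\Leftarrow)$, suppose $J_{G}(x)=\mathbb{C}^{n+1}$ for every $x$. I would first check directly that the action of $G$ is topologically transitive: given nonempty open sets $U,V$, pick any $x\in U$ and any $y\in V$; since $y\in J_{G}(x)$ there are $x_{m}\to x$ and words $g_{m}\in G$ with $g_{m}x_{m}\to y$, so for all large $m$ one has $x_{m}\in U$ and $g_{m}x_{m}\in V$, whence $g_{m}(U)\cap V\neq\emptyset$. Writing the set of vectors with dense orbit as $\bigcap_{j}\bigcup_{g\in G}g^{-1}(B_{j})$ for a countable base $(B_{j})$, each $\bigcup_{g}g^{-1}(B_{j})$ is open and, by transitivity, dense; hence this set is residual and in particular nonempty, so $G$ is hypercyclic.

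For the implication $(\Rightarrow)$, suppose $G$ is hypercyclic with hypercyclic vector $v$. The first point is that $J_{G}(v)=\mathbb{C}^{n+1}$: since $\mathbb{C}^{n+1}$ has no isolated points, a dense orbit meets every nonempty open set in infinitely many points, hence through words of arbitrarily large length; so for each target $y$ and each $m$ one finds a word $g_{m}$ of length at least $m$ with $\|g_{m}v-y\|<1/m$, and the constant sequence $x_{m}=v$ then exhibits $y\in J_{G}(v)$. The remaining, and main, task is to upgrade this from $v$ to an arbitrary $x$, for which I would prove that the hypercyclic vectors are dense: choosing such vectors $u_{m}\to x$, the same no-isolated-points argument applied to each dense orbit $G(u_{m})$ yields a word $g_{m}$ of length at least $m$ with $\|g_{m}u_{m}-y\|<1/m$; then $x_{m}:=u_{m}\to x$, the lengths tend to $\infty$, and $g_{m}x_{m}\to y$, giving $y\in J_{G}(x)$ and hence $J_{G}(x)=\mathbb{C}^{n+1}$.

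The crux is therefore the density of the hypercyclic vectors, equivalently topological transitivity deduced from the existence of a single dense orbit, and this is where I expect the real difficulty to lie: for a single operator it is immediate because $\mathbb{N}$ is totally ordered ($T^{b}=T^{b-a}T^{a}$ whenever $b\ge a$), but a finitely generated semigroup is only partially ordered, so a point of $G(v)$ lying in $U$ need not be carried into $V$ by any element of $G$. I would exploit commutativity here: any two words admit the common multiple $g_{1}g_{2}$, and for an invertible $g_{1}$ the orbit of $g_{1}v$ satisfies $\overline{G(g_{1}v)}\supseteq g_{1}\overline{G(v)}=g_{1}\mathbb{C}^{n+1}=\mathbb{C}^{n+1}$, so that $g_{1}v$ is again hypercyclic; as $U$ is arbitrary this forces the hypercyclic vectors to be dense whenever the orbit points arising from invertible elements are dense. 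The genuinely singular case, where this last point can fail, is the delicate one handled in the argument of Costakis and Manoussos \cite{chm2} to which the statement is attributed; there I would fall back on their analysis, using the closedness and forward $G$-invariance of the limit sets (i.e. $g\,J_{G}(x)\subseteq J_{G}(x)$), rather than re-deriving it.
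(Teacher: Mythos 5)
The paper does not actually prove this proposition: it is imported verbatim from \cite{chm2}, so there is no internal proof to compare yours against. Taken on its own terms, your argument is essentially the standard one and is sound where it is carried out. The $(\Leftarrow)$ direction (pointwise $J$-sets equal to $\mathbb{C}^{n+1}$ imply topological transitivity, then Baire category over the countable semigroup $G$ and a countable base gives a residual set of hypercyclic vectors) is correct. In $(\Rightarrow)$, your derivation of $J_{G}(v)=\mathbb{C}^{n+1}$ for the hypercyclic vector $v$ is also correct, though you should make the counting explicit: only finitely many elements of $G$ admit a representation $A_{1}^{k_{1}}\cdots A_{p}^{k_{p}}$ with $k_{1}+\dots+k_{p}\le N$, while a dense orbit meets every ball of the perfect space $\mathbb{C}^{n+1}$ in infinitely many points, so elements with all representations of length $>N$ must occur; that is what forces condition (1.1).

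The one genuinely incomplete step is the case of singular generators, which you explicitly defer to \cite{chm2}. You should not need to: it closes with one observation in the same spirit as the rest of your argument. Any orbit point $A_{1}^{k_{1}}\cdots A_{p}^{k_{p}}v$ with $k_{i}\ge 1$ for some singular $A_{i}$ lies in $\mathrm{range}(A_{i})$, a proper closed subspace of $\mathbb{C}^{n+1}$; hence all such points lie in a finite union of nowhere dense closed sets. Removing this union from the dense orbit $G(v)$ leaves a dense set consisting entirely of points $gv$ with $g$ invertible, and for these your identity $\overline{G(gv)}=\overline{g(G(v))}=g\bigl(\overline{G(v)}\bigr)=\mathbb{C}^{n+1}$ (using commutativity and that $g$ is a homeomorphism) shows each is hypercyclic. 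Thus the hypercyclic vectors are dense in all cases, and your final step (approximating an arbitrary $x$ by hypercyclic vectors $u_{m}$ and extracting words of length at least $m$) goes through unchanged. With that paragraph added, the proposal is a complete and self-contained proof of the cited result.
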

\medskip

\
\\
\\
{\it Proof of Corollary ~\ref{c:1}.} $(i)\Longrightarrow(ii)$
follows from Proposition ~\ref{p:101} and
$(ii)\Longrightarrow(iii)$ results from Theorem \ref{t:1}.
$(iii)\Longrightarrow(i):$ is trivial.
\medskip

\
\\
\\
{\it Proof of Corollary ~\ref{c:2}.} If $\mathcal{G}$ is not
hypercyclic then by Theorem \ref{t:1}, $J_{\mathcal{G}}(v)\neq
\mathbb{C}^{n}$  for any $v\in U'$, thus $U\cap
\mathcal{A}=\emptyset$ and therefore $\mathcal{A}
\subset\mathbb{C}^{n}\backslash U$. On the other hand,
$U=\varphi(U')$, so
\begin{align*}
  \mathbb{C}^{n}\backslash
U & =\mathbb{C}^{n}\backslash \varphi(U') \\
  \ & =\varphi(\mathbb{C}^{n}\backslash
U')\\ \ &
=\varphi\left(\underset{k=1}{\overset{r}{\bigcup}}L_{k}\right)\ \
\ \ \ \ \ (\mathrm{by}\ \ (1))\\ \ & =
\underset{k=1}{\overset{r}{\bigcup}}\varphi(L_{k})
\end{align*}

   with $L_{k} = \left\{x = [x_{1},\dots, x_{r}]^{T},\ x_{k}\in\{0\}\times\mathbb{C}^{n_{k}-1},\ x_{i}\in\mathbb{C}^{n_{i}},
\ \mathrm{if}\  i\neq k\right\}$. It follows that
$\mathcal{A}\subset\underset{k=1}{\overset{r}{\bigcup}}H_{k}$ with
$H_{k}=\varphi(L_{k})$ is an affine space with dimension $n-1$.
\qed

\bigskip

\bibliographystyle{amsplain}

\providecommand{\bysame}{\leavevmode\hbox to3em{\hrulefill}\thinspace}
\providecommand{\MR}{\relax\ifhmode\unskip\space\fi MR }
\providecommand{\MRhref}[2]{%
  \href{http://www.ams.org/mathscinet-getitem?mr=#1}{#2}
}
\providecommand{\href}[2]{#2}
\begin{thebibliography}{}

\end{thebibliography}


\begin{thebibliography}{9}


\bibitem{aAh-M12} A. Ayadi and H. Marzougui, \emph{J-class of finitely generated abelian semigroups of matrices on
$\mathbb{C}^{n}$ and Hypercyclicity},RACSAM, DOI
10.1007/s13398-013-0126-6.


\bibitem{AA12} A. Ayadi, \emph{Hypercyclic abelian groups of affine maps on
$\mathbb{C}^{n}$},Canadian Mathematical Bulletin, CMB-2012-019-6.

\bibitem{JCL} A. Ayadi and H. Marzougui, \emph{J-class abelian semigroups of matrices on
$\mathbb{C}^{n}$ and hypercyclicity }, RACSAM, DOI
10.1007/s13398-013-0126-6.

\bibitem{bm} F. Bayart, E. Matheron, Dynamics of Linear Operators, Cambridge Tracts in Math., 179, Cambridge University Press, 2009.

\bibitem{chm2} G. Costakis, D. Hadjiloucas and A. Manoussos, \emph{On the minimal number of matrices which form a
locally hypercyclic, non-hypercyclic tuple}, J. Math. Anal. Appl.
\textbf{365} (2010) 229--237.


\bibitem{p} K.G. Grosse-Herdmann and A. Peris, Linear Chaos, Universitext, Springer, to appear.



\end{thebibliography}
\vskip 0,4 cm

\end{document}